\definecolor{darkred}{rgb}{0.5,0.15,0.15}
\renewcommand{\ell}{X} % elliptic surface
\newcommand{\be}{\begin{eqnarray}}
\newcommand{\ee}{\end{eqnarray}}
\newcommand{\bea}{\begin{eqnarray}}
\newcommand{\eea}{\end{eqnarray}}
\newcommand{\ben}{\begin{eqnarray}}
\newcommand{\een}{\end{eqnarray}}
\theoremstyle{plain}
\newtheorem{thm}{Theorem}
\newtheorem{cor}[thm]{Corollary}
\theoremstyle{definition}
\newtheorem*{question*}{Question}
\theoremstyle{remark}
\numberwithin{equation}{section}
\title{Difference equation for the Gromov-Witten potential of the resolved conifold}
\author{Murad Alim}
\address{Fachbereich Mathematik, Universit\"at Hamburg, Bundesstr. 55, 20146, Hamburg}
\begin{document}

%\maketitle

\begin{abstract}
A difference equation is proved for the Gromov-Witten potential of the resolved conifold. Using the Gopakumar-Vafa resummation of the Gromov-Witten invariants of any Calabi-Yau threefold, it is further shown that similar difference equations are satisfied by the part of the resummed potential containing the contribution of the genus zero GV invariants. 
\end{abstract}
\maketitle
%\tableofcontents
\section{Introduction}
Gromov-Witten (GW) theory of a non-singular algebraic variety $X$ is concerned with the study of integrals over the moduli spaces of maps from Riemann surfaces into $X$. When $X$ is a point, GW theory has been related to $2d$ topological gravity and the KdV integrable hierarchy in the works of Witten \cite{Witten} and Kontsevich \cite{Kontsevich}. For $X=\mathbb{P}^1$ the relation of GW theory to the Toda integrable hierarchy was studied in \cite{Pandharipande} using a conjectured difference equation for the GW potential which implied a corresponding difference equation for Hurwitz numbers. The latter was proved in \cite{Okounkov}. GW theory for Calabi-Yau (CY) threefolds has greatly benefited from mirror symmetry, which was discovered in String Theory and which puts forward connections between symplectic and algebraic geometry. Within String Theory, GW theory is studied using topological strings, which offer many connections to Chern-Simons theory as well as matrix models. A general integrable hierarchy structure of topological string theory and in turn GW theory is expected \cite{ADKMV} and is subject of continued efforts. Recently the subject has been further stimulated from the combination of ideas from wall-crossing in Donaldson Thomas (DT) theory and the exact WKB analysis, most notably in works of Bridgeland \cite{BridgelandDT,BridgelandCon}. In particular, in \cite{BridgelandCon}, a tau function for the resolved conifold is put forward which solves a Riemann-Hilbert problem associated to DT wall-crossing while also containing the information of all genus GW invariants in an asymptotic expansion.

This note is concerned with the GW potential of the resolved conifold. In particular, a difference equation similar to the one conjecturally satisfied by the GW theory of $\mathbb{P}^1$ \cite{Pandharipande} is proved. The ideas needed for the proof are adapted from the work of Iwaki, Koike and Takei \cite{Iwaki} where a difference equation for the free energy of the Weber curve is proved. The latter is a generating function of the Euler characteristics of the moduli spaces of curves studied in \cite{HZ}. Similar difference equations appear in many interesting problems of mathematical physics such as for example in the study of matrix models and non-perturbative physics, see e.g. \cite[Eq.~4.134 and Sec.~5.1]{Marinolecture} and in the study of partition functions of supersymmetric $\mathcal{N}=2$ theories where they characterize the perturbative piece of the partition function, see e.g. \cite[Appendix A]{NO} as well as in the form of q-deformed Painlev\'e equations satisfied by suitably constructed topological string partition functions, see \cite{BGT} and references therein. 

\subsection*{Acknowledgements}
The author would like to thank Florian Beck, Arpan Saha and J\"org Teschner for comments on the draft as well as the LMU in Munich for hospitality. This work is supported through the DFG Emmy Noether grant AL 1407/2-1.

\section{Gromov-Witten potentials}
Let $X$ be a Calabi-Yau threefold. The GW potential of $X$ is the following formal power series:
\begin{equation}
F(\lambda,t) = \sum_{g\ge 0}  \lambda^{2g-2} F^g(t)= \sum_{g\ge 0}  \lambda^{2g-2} \sum_{\beta\in H_2(X,\mathbb{Z})}  N^g_{\beta} \,q^{\beta}\, ,
\end{equation}
where $q^{\beta} := \exp (2\pi i t^{\beta})$ is a formal variable living in a suitable completion of the effective cone in the group ring of $H_2(X,\mathbb{Z})$, $\lambda$ is a formal parameter corresponding to the topological string coupling and $N^g_{\beta}$ are the GW invariants. 

The GW potential can be written as:
\begin{equation}
F=F_{\beta=0} + \tilde{F}\,,
\end{equation}
where $F_{\beta=0}$ denotes the contribution from constant maps and $ \tilde{F}$ the contribution from non-constant maps. The constant map contribution at genus 0 and 1 are $t$ dependent and the higher genus constant map contributions take the universal form \cite{Faber}:
\begin{equation}
F_{\beta=0}^g = \frac{\chi(X)(-1)^{g-1}\, B_{2g}\, B_{2g-2}}{4g (2g-2)\, (2g-2)!}\,, \quad g\ge2\,,
\end{equation}
where $\chi(X)$ is the Euler character of $X$.
This note is concerned with the GW potential of the CY threefold given by the total space of the rank two bundle over the projective line:
\begin{equation}
\mathcal{O}(-1) \oplus \mathcal{O}(-1) \rightarrow \mathbb{P}^1\,,
\end{equation}
which corresponds to the resolution of the conifold singularity in $\mathbb{C}^4$ and is known as the resolved conifold. The GW potential for this geometry was determined in physics \cite{Gopakumar:1998ii,GV}, and in mathematics \cite{Faber} with the following outcome for the non-constant maps:\footnote{See also \cite{MM} for the determination of $F^g$ from a string theory duality and the explicit appearance of the polylogarithm expressions.}
\begin{equation}
\tilde{F}^0= \textrm{Li}_{3}(q)\,, \quad \tilde{F}^g=\frac{(-1)^{g-1}B_{2g}}{2g (2g-2)!} \textrm{Li}_{3-2g} (q)\,, \quad g\ge1\,,
\end{equation}
where $q:=\exp(2\pi i \,t)$ and the polylogarithm ist defined by:
\begin{equation}
\textrm{Li}_s(z) = \sum_{n=0}^{\infty} \frac{z^n}{n^s}\, ,\quad s\in \mathbb{C}\,.
\end{equation}

%%%%%%%%%%%%%%%%%%%%%%%%%%%%%%%%%%%%%%%%%%%

\section{A difference equation for the Gromov-Witten potential}
\begin{thm} \label{diffeq} The contribution of the non-constant maps $\tilde{F}(\lambda,t)$ to the GW potential of the resolved conifold satisfies the following difference equation:
\begin{equation}
\tilde{F}(\lambda,t+\check{\lambda}) - 2 \tilde{F}(\lambda,t) + \tilde{F}(\lambda,t-\check{\lambda})= \left(\frac{1}{2\pi }\frac{\partial}{\partial t}\right)^2\, \tilde{F}^0(t) \,,\quad \check{\lambda}=\frac{\lambda}{2\pi}\,.
\end{equation}
\end{thm}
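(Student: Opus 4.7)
My plan is to prove the identity by a direct computation of both sides using the closed-form polylogarithm expressions for the genus-$g$ GW potentials, with the two sides linked by a classical Bernoulli generating-function identity.

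First I would handle the right-hand side. Because $q=e^{2\pi i t}$, the operator $\tfrac{1}{2\pi}\partial_t$ acts on functions of $q$ as $i\,q\,\partial_q$. Applying the familiar identity $q\,\partial_q\,\mathrm{Li}_s(q)=\mathrm{Li}_{s-1}(q)$ twice to $\tilde F^0(t)=\mathrm{Li}_3(q)$ yields
\[
\left(\tfrac{1}{2\pi}\tfrac{\partial}{\partial t}\right)^{\!2}\tilde F^0(t)=-(q\partial_q)^2\mathrm{Li}_3(q)=-\mathrm{Li}_1(q)=-\sum_{n\ge1}\frac{q^n}{n}.
\]

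Next I would expand the left-hand side in powers of $q$. Writing $\tilde F^g(t)=\sum_{n\ge1}a_n^g\,q^n$ with $a_n^0=n^{-3}$ and $a_n^g=\tfrac{(-1)^{g-1}B_{2g}}{2g(2g-2)!}\,n^{2g-3}$ for $g\ge1$, the shift $t\mapsto t\pm\check\lambda$ multiplies each $q^n$ by $e^{\pm in\lambda}$. Using $e^{in\lambda}+e^{-in\lambda}-2=-4\sin^2(n\lambda/2)$, the symmetric second difference of $\tilde F^g$ equals $-4\sum_n a_n^g\,q^n\sin^2(n\lambda/2)$. Combining with the $\lambda^{2g-2}$ weights and (formally) interchanging the summations over $g$ and $n$ gives
\[
\tilde F(\lambda,t+\check\lambda)-2\tilde F(\lambda,t)+\tilde F(\lambda,t-\check\lambda)=-4\sum_{n\ge1}q^n\sin^2(n\lambda/2)\sum_{g\ge0}\lambda^{2g-2}a_n^g.
\]

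The crux is to recognize that the inner sum is governed by the classical Bernoulli identity
\[
\Bigl(\frac{y}{\sin y}\Bigr)^{\!2}=1+\sum_{g\ge1}\frac{(-1)^{g-1}B_{2g}}{2g(2g-2)!}(2y)^{2g},
\]
applied at $y=n\lambda/2$. This yields $\sum_{g\ge0}\lambda^{2g-2}a_n^g=\bigl(4n\sin^2(n\lambda/2)\bigr)^{-1}$, after which the $\sin^2(n\lambda/2)$ factors cancel and the left-hand side collapses to $-\sum_{n\ge1}q^n/n$, matching the right-hand side computed above.

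The main obstacle, if any, is the Bernoulli identity itself: it is classical, is exactly what underlies the Gopakumar--Vafa resummation of the resolved conifold free energy into $\sum_{n\ge 1}q^n/\bigl(n(2\sin(n\lambda/2))^2\bigr)$, and can be derived by differentiating the standard expansion of $y\cot y$. All remaining manipulations are formal rearrangements of a Laurent series in $\lambda$ and a power series in $q$, so no genuine analytic issue arises.
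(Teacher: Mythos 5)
Your proof is correct and rests on the same key identity as the paper's: the Bernoulli generating function for $\frac{w^2e^w}{(e^w-1)^2}$, which under $w=2iy$ is precisely your expansion of $(y/\sin y)^2$. The only real difference is presentational: the paper promotes that identity to an operator identity in $w=\check{\lambda}\,\partial_t$ acting on $\mathrm{Li}_1(q)$, whereas you evaluate it termwise on the eigenfunctions $q^n$ of $\theta_q=q\,\partial_q$ (eigenvalue $n$), which has the minor advantages of turning the paper's formal antiderivative $\theta_q^{-2}$ into an honest division by $n^2$ and of making the Gopakumar--Vafa closed form $\sum_{n\ge1} q^n/\bigl(n(2\sin(n\lambda/2))^2\bigr)$ explicit.
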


\begin{proof}
Starting from $\textrm{Li}_1(q)=-\log(1-q)$ and using the property:
\begin{equation} \label{polylogder}
\theta_q \textrm{Li}_s(q) =\textrm{Li}_{s-1} (q)\,,  \quad \theta_q:= q \,\frac{d}{dq}\,,
\end{equation}
we write 
\begin{equation}
\tilde{F}^g=\frac{(-1)^{g-1}B_{2g}}{2g (2g-2)!} \,\theta_q^{2g-2} \textrm{Li}_1(q)\,, \quad g\ge1\,.
\end{equation}
In the following, the proofs of \cite[Thm.~4.7 and 4.9]{Iwaki} are adapted to the current setting. Consider the generating function of Bernoulli numbers:
\begin{equation}
\frac{w}{e^w-1} = \sum_{n=0}^{\infty} B_n \frac{w^n}{n!}\,.
\end{equation}
Applying $w \frac{d}{dw}$ to both sides and rearranging gives:
\begin{equation}
\frac{w^2 e^w}{(e^w-1)^2} = B_0 - \sum_{n=2}^{\infty} \frac{B_n}{n (n-2)!} w^n = 1 -\sum_{g=1}^{\infty} \frac{B_{2g}}{2g (2g-2)!} w^{2g}\,,
\end{equation}
where the last equality is obtained by noting that all $B_{2n+1}, n\in \mathbb{N}\setminus\{0\}$ vanish. This yields the following:
\begin{equation}
(e^w-2+e^{-w})  \left(\frac{1}{w^2} -\sum_{g=1}^{\infty} \frac{B_{2g}}{2g (2g-2)!} w^{2g-2} \right) = 1\,.
\end{equation}
In the next step, we replace on both sides of this equation the variable $w$ by an operator acting on functions of $t$ namely:
$$ w \rightarrow \check{\lambda} \frac{\partial}{\partial t}= i \lambda \,\theta_q\,. $$
Acting with both sides on $\textrm{Li}_1(q)$ we obtain:
\begin{equation}
(e^{\check{\lambda} \partial_t}-2\cdot \textrm{id}+e^{-\check{\lambda}\partial_t})  \left(- \lambda^{-2} \theta_q^{-2} -\sum_{g=1}^{\infty} \frac{(-1)^{g-1}B_{2g}}{2g (2g-2)!}  \lambda^{2g-2}\,\theta_q^{2g-2} \right) \textrm{Li}_1(q) = \textrm{id}\cdot \textrm{Li}_1(q) \,,
\end{equation}
by using $\theta_q^{2} \tilde{F}^0=  \theta_q^{2} \textrm{Li}_3(q)= \textrm{Li}_1(q) $ and interpreting $\theta_q^{-1}$ as an anti-derivative, we obtain:
\begin{equation}
(e^{\check{\lambda} \partial_t}-2\cdot \textrm{id}+e^{-\check{\lambda}\partial_t})  \tilde{F}(\lambda,t) = -\theta_q^2 \tilde{F}^0(q) \,,
\end{equation}
which proves the theorem.
\end{proof}

\begin{cor}
For every $g\ge 1$ the difference equation gives a recursive differential equation which determines $\frac{\partial^2}{\partial t^2}\tilde{F}^g(t)\, \, g \ge 1$ by:
\begin{equation}
\sum_{k=0}^{g} \frac{1}{(2g-2k+2)!} \left(\frac{1}{2\pi} \frac{\partial}{\partial t}\right)^{2g-2k+2} \tilde{F}^k(t) =0 \,, \quad g\ge1\,.
\end{equation}
\end{cor}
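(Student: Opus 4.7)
The plan is to derive the corollary from the difference equation of Theorem~\ref{diffeq} by expanding both sides as formal power series in $\lambda$ and matching coefficients.

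First I would Taylor-expand the shifted arguments of $\tilde{F}$ around $t$. Because $\tilde{F}(\lambda,t+\check{\lambda})$ and $\tilde{F}(\lambda,t-\check{\lambda})$ enter symmetrically, all odd-order terms cancel, and using $\check{\lambda} = \lambda/(2\pi)$ the left-hand side of the difference equation becomes
\begin{equation*}
2\sum_{n\ge 1}\frac{\lambda^{2n}}{(2n)!}\left(\frac{1}{2\pi}\frac{\partial}{\partial t}\right)^{2n}\tilde{F}(\lambda,t).
\end{equation*}
Next I would substitute the genus expansion $\tilde{F}(\lambda,t) = \sum_{k\ge 0}\lambda^{2k-2}\tilde{F}^k(t)$ and reindex the resulting double sum by setting $n = g+1-k$, with $g\ge 0$ and $0\le k\le g$ (the upper bound forced by $n\ge 1$). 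This rewrites the left-hand side as
\begin{equation*}
2\sum_{g\ge 0}\lambda^{2g}\sum_{k=0}^{g}\frac{1}{(2g-2k+2)!}\left(\frac{1}{2\pi}\frac{\partial}{\partial t}\right)^{2g-2k+2}\tilde{F}^k(t).
\end{equation*}
Matching coefficients of $\lambda^{2g}$ against the $\lambda$-independent right-hand side then finishes the proof: the $g=0$ coefficient evaluates to $\left(\frac{1}{2\pi}\frac{\partial}{\partial t}\right)^2\tilde{F}^0(t)$ and coincides with the right-hand side, while for every $g\ge 1$ the coefficient must vanish, and dividing by two yields precisely the stated recursion.

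The argument is entirely formal, so there is no genuine obstacle beyond careful index bookkeeping. The one point worth flagging is that the single $n=1,\,k=0$ term produced by the Taylor expansion precisely absorbs the inhomogeneous right-hand side of the difference equation, which is what ensures that the identity at each higher genus $g\ge 1$ emerges as a homogeneous linear relation among $\tilde{F}^0,\ldots,\tilde{F}^g$.
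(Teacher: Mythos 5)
Your proposal is correct and follows exactly the route the paper indicates (its proof is the one-line remark that the corollary ``follows from expanding the L.H.S. of the theorem in $\lambda$ and then matching the coefficients of $\lambda^{2g}$''); you have simply carried out the Taylor expansion, the reindexing $n=g+1-k$, and the coefficient matching explicitly. The bookkeeping, including the observation that the $g=0$ coefficient reproduces the inhomogeneous right-hand side while the $g\ge 1$ coefficients give the homogeneous recursion, is accurate.
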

\begin{proof}
This follows from expanding the L.H.S. of the theorem in $\lambda$ and then matching the coefficients of $\lambda^{2g}$ on both sides.
\end{proof}

\section{Difference equation for Gopakumar-Vafa resummation}
The statement of the theorem is in the same spirit as the conjectured difference equation \cite{Pandharipande} for GW theory of $\mathbb{P}^1$ except that here only the genus zero part of the GW potential appears on the R.H.S. We expect this result to shed further light on the relation of GW theory of the resolved conifold and the tau function studied in \cite{BridgelandCon}. Furthermore, it may be useful for proving a conjecture of Brini \cite{Brini} relating the GW theory of the resolved conifold to the Ablowitz-Ladik integrable hierarchy.\footnote{Note added in the revision: The proof of Brini's conjecture as well as the relation to the special functions appearing in Bridgeland's tau function has now appeared in Ref.~\cite{AS}.} Moreover, the result may offer new insights into the universal structure of GW theory on CY threefolds. An indication of this is given in the following corollary, which requires the Gopakumar-Vafa resummation of the GW potential \cite{Gopakumar:1998ii}. The latter conjectures that there exist $n^g_{\beta} \in \mathbb{Z}$ (called GV invariants), such that for any CY threefold $X$, the GW potential can be written as:
\begin{equation}
\tilde{F}(\lambda,t)= \sum_{\beta>0}\sum_{g\ge 0} n^g_{\beta}\, \sum_{k\ge 1} \frac{1}{k} \left( 2 \sin \left( \frac{k\lambda}{2}\right)\right)^{2g-2} q^{k\beta}\,.
\end{equation}
We focus on the contribution of the genus zero GV invariants to the resummed GW potential, which we denote by:
$$ \tilde{F}_{GV_0}(\lambda,t)=\sum_{\beta>0} n^0_{\beta}\, \sum_{k\ge 1} \frac{1}{k} \left( 2 \sin \left( \frac{k\lambda}{2}\right)\right)^{-2} q^{k\beta} \,.$$

\begin{cor}
Fix $\alpha \in H_{2}(X,\mathbb{Z}), \alpha>0$, then $\tilde{F}_{GV_0}$ satisfies the following difference equation in the corresponding formal variable $t^{\alpha}$:
\begin{equation}
\tilde{F}_{GV_0}(\lambda,t^{\circ},t^{\alpha}+\check{\lambda}) - 2 \tilde{F}_{GV_0}(\lambda,t) + \tilde{F}_{GV_0}(\lambda,t^{\circ},t^{\alpha}-\check{\lambda}) =   \left(\frac{1}{2\pi }\frac{\partial}{\partial t^{\alpha}}\right)^2\, \tilde{F}^0(t) \,,\quad \check{\lambda}=\frac{\lambda}{2\pi}\,,
\end{equation}
where $t$ refers to the set of all formal variables $t^{\beta}$, $t^{\circ}=t\setminus \{ t^{\alpha}\}$ and
$$ \tilde{F}^0(t)=\sum_{\beta>0} n_{\beta}^0 \textrm{Li}_3(q^{\beta})\,, \quad q^{\beta}= \exp(2\pi i t^{\beta})\,.$$ 
\end{cor}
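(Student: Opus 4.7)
The plan is to reduce the corollary to Theorem~\ref{diffeq} by isolating a single conifold-type contribution inside the GV sum. Set
\[
f(\lambda,u) := \sum_{k\ge 1}\frac{1}{k}\left(2\sin\tfrac{k\lambda}{2}\right)^{\!-2} e^{2\pi i k u}\,,
\]
so that $\tilde{F}_{GV_0}(\lambda,t) = \sum_{\beta>0} n^0_\beta \, f(\lambda,t^\beta)$. The crucial observation is that $f(\lambda,u)$ is, up to renaming the curve-class variable, exactly the GV resummation of the non-constant-map GW potential of the resolved conifold (the case where the only nonzero GV invariant is $n^0=1$ for the fiber class). Since the polylog and sine-resummed forms of the conifold $\tilde{F}$ agree as formal series, Theorem~\ref{diffeq} applies directly to $f$ and yields
\[
f(\lambda, u+\check{\lambda}) - 2f(\lambda,u) + f(\lambda, u-\check{\lambda}) = \left(\frac{1}{2\pi}\frac{\partial}{\partial u}\right)^{\!2} \textrm{Li}_3(e^{2\pi i u})\,.
\]

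Next I would observe that the statement of the corollary treats the collection $\{t^\beta\}_{\beta>0}$ as formally independent variables; this convention is implicit in the decomposition $t=(t^\alpha,t^\circ)$ and is the only one under which the shift $t^\alpha\mapsto t^\alpha\pm\check{\lambda}$ is unambiguous. Under this convention each factor $q^{k\beta}=(q^\beta)^k$ depends only on $t^\beta$, so the substitution $t^\alpha\mapsto t^\alpha\pm\check{\lambda}$ affects only the summand $n^0_\alpha f(\lambda,t^\alpha)$ of $\tilde{F}_{GV_0}$ and leaves every other $f(\lambda,t^\beta)$ invariant. Consequently the three-term difference on the left-hand side collapses to $n^0_\alpha$ times the three-term difference of $f(\lambda,t^\alpha)$, which by the display above equals $n^0_\alpha(2\pi)^{-2}\partial_{t^\alpha}^2\,\textrm{Li}_3(q^\alpha)$.

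To conclude, the same independence argument shows that $\partial_{t^\alpha}$ annihilates every summand of $\tilde{F}^0(t)=\sum_{\beta>0} n^0_\beta \,\textrm{Li}_3(q^\beta)$ except the one with $\beta=\alpha$. Hence the right-hand side also equals $n^0_\alpha(2\pi)^{-2}\partial_{t^\alpha}^2\,\textrm{Li}_3(q^\alpha)$, matching the left-hand side and proving the corollary.

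There is no new analytic content beyond Theorem~\ref{diffeq}; the argument is essentially bookkeeping. The only point deserving care in the write-up is the formal-independence convention on $\{t^\beta\}$, without which the shift in the single variable $t^\alpha$ would be ambiguous in view of the group-ring relations $t^{\beta_1+\beta_2}=t^{\beta_1}+t^{\beta_2}$.
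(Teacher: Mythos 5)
Your proposal is correct and follows essentially the same route as the paper: both isolate the $\beta=\alpha$ summand, apply Theorem~\ref{diffeq} to it (the paper by re-expanding $\left(2\sin(k\lambda/2)\right)^{-2}$ into polylogarithms via the Laurent expansion and repeating the theorem's proof, you by invoking the equivalence of the polylog and sine-resummed forms directly), and observe that the difference operator in $t^{\alpha}$ annihilates the $t^{\alpha}$-independent remainder. Your explicit remark on the formal independence of the variables $\{t^{\beta}\}$ is a sensible clarification of the convention implicit in the paper, not a departure from its argument.
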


\begin{proof}
We use the Laurent expansion:
\begin{equation}
 \left( 2 \sin \left( \frac{s}{2}\right)\right)^{-2}= \frac{1}{(s)^2} - \frac{1}{12} + \sum_{g\ge 2} \frac{(-1)^{g-1}B_{2g}}{2g (2g-2)!} s^{2g-2}\,,
 \end{equation}
to write:
\begin{eqnarray}
 \tilde{F}_{GV_0}(\lambda,t)&=&\sum_{\beta>0, \beta \ne \alpha} n_{\beta}^0  \left( \lambda^{-2}\,\textrm{Li}_3(q^{\beta}) + \sum_{g=1}^{\infty} \lambda^{2g-2}   \frac{(-1)^{g-1}B_{2g}}{2g (2g-2)!} \textrm{Li}_{3-2g} (q^{\beta}) \right)\, \nonumber \\
 &+& n_{\alpha}^0  \left( \lambda^{-2}\,\textrm{Li}_3(q^{\alpha}) + \sum_{g=1}^{\infty} \lambda^{2g-2}   \frac{(-1)^{g-1}B_{2g}}{2g (2g-2)!} \textrm{Li}_{3-2g} (q^{\alpha}) \right)\,.
 \end{eqnarray}
The proof of Thm.\ref{diffeq} can be repeated for the second part on the R.H.S. The statement of the corollary follows by noting that the operator giving the difference equation in $t^{\alpha}$ gives zero when acting on the first, $t^{\alpha}$ independent, part.
 \end{proof}

%%%%%%%%%%%%%%%%%%%%%%%%%%%%%%%%%

%\bibliographystyle{alpha}

\newcommand{\etalchar}[1]{$^{#1}$}

\end{document}